\documentclass[a4paper,12pt]{amsart}

\usepackage[none]{hyphenat}
\usepackage{a4wide,hyperref,amsmath,amssymb,enumitem, stmaryrd}
\usepackage{tikz}
\usetikzlibrary{decorations.pathreplacing,calligraphy}
\usetikzlibrary{shapes}
\tikzset{
			inner sep=1pt,semithick,
			vertex/.style={circle,draw,fill,minimum size= 1pt},
			vertexb/.style={rectangle,draw,fill, minimum size = 3pt},
			vertexd/.style={rectangle,draw,fill=white, minimum size = 3pt},			
			vertexc/.style={circle,draw,fill=white,minimum size= 1pt},			
			thickedge/.style={line width=1pt},				
			font=\tiny
}

\newtheorem{problem}{Problem}
\newtheorem{claim}{Claim}

\newcommand{\lcr}{\overline{\operatorname{cr}}}
\newcommand{\Cr}{\operatorname{cr}}

\newcommand{\old}[1]{{}}
\makeatletter
\DeclareRobustCommand{\cev}[1]{%
  {\mathpalette\do@cev{#1}}%
}
\newcommand{\do@cev}[2]{%
  \vbox{\offinterlineskip
    \sbox\z@{$\m@th#1 x$}%
    \ialign{##\cr
      \hidewidth\reflectbox{$\m@th#1\vec{}\mkern4mu$}\hidewidth\cr
      \noalign{\kern-\ht\z@}
      $\m@th#1#2$\cr
    }%
  }%
}
\makeatother

\title[Curious crossing-critical edges]{Curious crossing-critical edges}

\author[\'E. Czabarka, A. Helm]{\'Eva Czabarka, Alec Helm}
\address{\'Eva Czabarka and Alec Helm\\ Department of Mathematics \\ University of South Carolina \\ Columbia, SC 29208 \\ USA}
\email{czabarka@math.sc.edu,ah191@email.sc.edu }
\subjclass[2020]{Primary 05C10}
\keywords{graph drawing, crossing number, crossing-critical, Kuratowski subgraph}

\begin{document}

\begin{abstract}
Motivated by Kuratowski's theorem, a Kuratowski subgraph of a graph is a subgraph that is a subdivided $K_5$ or a subdivided $K_{3,3}$. An edge is crossing-critical if the crossing number decreases after removing the edge.
In this note, we present the following examples: a graph with an edge that is crossed in every optimal drawing of the graph, but the edge is not in any Kuratowski subgraph of the graph; a graph with a unique edge that is in every Kuratowski subgraph but is not crossed in any optimal drawing of the graph; and
a graph with a crossing-critical edge that is not present in any Kuratowski subgraph and is not crossed in any optimal drawing of the graph. 
F\'ary's theorem implies that the Kuratowski subgraphs are the only obstructions to a graph having a crossing-free drawing with all edges drawn as straight lines. The three example graphs given also hold if we restrict drawings to only have straight line edges, and thus also apply to the rectilinear crossing number.
\end{abstract}

\maketitle

\section{Introduction}

In this note, a graph is a finite loopless multigraph, i.e., we allow parallel edges but no loops.  Edges are  parallel if they have the same endvertices.
Given vertices $x,y$ of the graph $G$,  the \emph{multiplicity} of the $x$-$y$ edge is the number of $x$-$y$ edges in $G$.
A \emph{simple graph} is a graph $G$ without parallel edges, i.e., a graph where the edge multiplicity is one for each adjacent vertex pair $x,y$.
Given a multigraph $G$, the \emph{underlying simple graph} is a simple graph $H$ on the same vertex set, 
such that $H$ has a single edge between  vertices $x,y$ precisely when $x,y$ are adjacent in $G$. 
If $G$ is a graph, a \emph{representation} of $G$ is the pair $(H,w_G)$, where $H$ is the underlying simple graph, and $w_G:E(H)\rightarrow \mathbb{Z}^+$ is defined such that for any $x$-$y$ edge $e$ of $H$, $w_G(e)$ is the edge multiplicity of the $x$-$y$ edge in $G$. 
If $G$ is a graph with underlying simple graph $H$ and $e$ is an $x$-$y$ edge of $H$, then the $x$-$y$-edges of $G$ are \emph{copies} of $e$.
 
A \emph{subdivision of a graph $H$} is a graph $G$ where some of the edges of $H$ are replaced by paths whose internal vertices are not in $H$  and not shared with any other path.
The vertices of $H$ are called the \emph{main vertices} of the subdivision, and the internal vertices of the paths are  called the \emph{subdividing vertices}.

A \emph{drawing $\mathcal{D}$ of a graph $G$} is an embedding of the graph in the plane such that the vertices are assigned to points of the plane injectively, the edges
are mapped to Jordan arcs connecting to the endvertices of the edge, and if $v$ is not an endvertex of the edge $e$ then the image  $\mathcal{D}(v)$ of $v$ is disjoint from the image 
$\mathcal{D}(e)$ of $e$.

A graph is \emph{planar} if it can be drawn in the plane without crossing edges, i.e., such that the images of two edges only have their common endvertices in common; in other words edges do not cross in the drawing.
The celebrated Kuratowski theorem \cite{kuratowski} states that a graph if planar precisely when it does not contain a subdivision of the complete graph $K_5$ or the complete bipartite
graph $K_{3,3}$. $H$ is called  a \emph{Kuratowski subgraph} of the graph $G$ if $H$ is a subgraph of $G$ and $H$ is a subdivision of either $K_{3,3}$ or $K_5$. Note that Kuratowski subgraphs  are simple graphs  by definition.
In a sense, Kuratowski's theorem states that Kuratowski subgraphs force crossings to appear in 
 any drawing of the graph. F\'ary's theorem (proved independently in \cite{fary, wagfary}) states that every planar graph can be drawn without crossings even if we restrict to drawings using straight line edges (the \emph{rectilinear drawings}). Thus, the Kuratowski subgraphs are also the only obstacles to having planar straight line drawings. 
 Moreover, in any drawing of a graph, every Kuratowski subgraph must have a pair of edges that cross.

The \emph{crossing number $\Cr(\mathcal{D})$} of a drawing $\mathcal{D}$ of $G$ is $\sum\limits_{e,f\in E(G)}|\mathcal{D}(e)^0\cap\mathcal{D}(f)^0|$, where for an edge $e$, 
$\mathcal{D}(e)^0$ is the interior of
$\mathcal{D}(e)$ , and the summation goes for unordered pairs of distinct edges. The \emph{crossing number $\Cr(G)$ of a graph $G$} is the minimum crossing number over all its drawings. A drawing $\mathcal{D}$ 
of $G$ is  called \emph{optimal} if $\Cr(\mathcal{D})=\Cr(G)$. An edge $e\in E(G)$ is called \emph{crossing-critical} if $\Cr(G-e)<\Cr(G)$.
The \emph{rectilinear crossing number} $\lcr(G)$ of the graph $G$ is the minimum crossing number of its rectilinear drawings. Clearly $\Cr(G)\le\lcr(G)$, with equality precisely when $G$ has an optimal drawing that is rectilinear.
An edge $e\in E(G)$ is called \emph{crossing-critical with respect to rectilinear drawings} if $\lcr(G-e)<\lcr(G)$.

We call an edge $f$ of a graph $G$ a \emph{Kuratowski edge} if it belongs to some Kuratowski subgraph of $G$, and a Kuratowski edge $f$ is further called a \emph{strong Kuratowski edge} if it belongs to every Kuratowski subgraph. The Kuratowski theorem implies that $f$ is a strong Kuratowski edge of the graph $G$ precisely when $G$ is nonplanar but $G-f$ is planar. In particular, strong Kuratowski edges are both crossing-critical and crossing-critical with respect to rectilinear drawings.

If $h$ is an edge that is crossed in some optimal drawing $\mathcal{D}$ of $G$, then $h$ is clearly crossing-critical, as the drawing of $G-h$ induced by $\mathcal{D}$ has fewer than $\Cr(G)$ crossings; similar statement applies to the rectilinear versions of the concept.

\begin{figure}[http]
\center
\begin{tikzpicture}[scale=1]
\node[draw,circle, minimum size = 13pt] (u) at (0,0) {$u$};
\node[draw,circle, minimum size = 13pt] (v1) at (3,0) {$v_1$};
\node[draw,circle, minimum size = 13pt] (v3) at (.5,1) {$v_3$};
\node[draw,circle, minimum size = 13pt] (v2) at (2,.5) {$v_2$};
\node[draw,circle, minimum size = 13pt] (t) at (1.4,2.3) {$t$};
\node[draw,circle, minimum size = 13pt] (v4) at (0,3) {$v_4$};
\node[draw,circle, minimum size = 13pt] (s) at (3,3) {$s$}; 
\draw (u) to [out=-45,in=225] (v1);
\draw (u) to [out=-35,in=215] (v1);
\draw (u) to [out=-25,in=205] (v1);
\draw (s) to [out=-45,in=45] (v1);
\draw (s) to [out=-55,in=55] (v1);
\draw (s) to [out=-65,in=65] (v1);
\draw (v4) to [out=55,in=-235] (s);
\draw (v4) to [out=45,in=-225] (s);
\draw (v4) to [out=35,in=-215] (s);
\draw (v4) to [out=25,in=-205] (s);
\draw (u) to [out=145,in=-145] (v4);
\draw (u) to [out=135,in=-135] (v4);
\draw (u) to [out=125,in=-125] (v4);
\draw (u) to [out=115,in=-115] (v4);
\draw (u) to [out=105,in=-105] (v4);
\draw (u) to [out=95,in=-95] (v4);
\draw (v4) to [out=-10,in=160] (t);
\draw (v4) to [out=-20,in=170] (t);
\draw (u) to [out=85,in=-135] (v3);
\draw (u) to [out=75,in=-125] (v3);
\draw (u) to [out=65,in=-115] (v3);
\draw (u) to [out=45,in=-165] (v2);
\draw (u) to [out=35,in=-155] (v2);
\draw (u) to [out=25,in=-145] (v2);
\draw (u) to [out=15,in=-135] (v2);
\draw (u) to [out=5,in=-125] (v2);
\draw (u) to [out=-5,in=-115] (v2);
\draw (t) to [out=-95,in=125] (v2);
\draw (t) to [out=-85,in=115] (v2);
\draw (t) to [out=-75,in=95] (v2);
\draw (t) to [out=-125,in=85] (v3);
\draw (t) to [out=-105,in=65] (v3);
\draw (s) to [out=225,in=65] (v2);
\draw (s) to [out=235,in=55] (v2);
\draw (s) to [out=245,in=45] (v2);
\draw (v1) to [out=100,in=0] (1.3,2.8) to [out=180, in=90] (.15,1.7) to [out=270,in=135] (v3);
\node at (2.3,2.9) {$e$};
\draw (s)--(t);
\node at (1.5,-1.2) {An optimal drawing of};
\node at (1.5,-1.6) {the graph $G^*$.};
\end{tikzpicture}
\quad
\begin{tikzpicture}[scale=1]
\node[draw,circle, minimum size = 13pt] (u) at (0,0) {$u$};
\node[draw,circle, minimum size = 13pt] (v1) at (3,0) {$v_1$};
\node[draw,circle, minimum size = 13pt] (v3) at (.5,1) {$v_3$};
\node[draw,circle, minimum size = 13pt] (v2) at (2,.5) {$v_2$};
\node[draw,circle, minimum size = 13pt] (t) at (1.4,2.3) {$t$};
\node[draw,circle, minimum size = 13pt] (v4) at (0,3.2) {$v_4$};
\node[draw,circle, minimum size = 13pt] (s) at (3,3.2) {$s$}; 
\draw (u) to [out=-45,in=225] (v1);
\draw (s) to [out=-65,in=65] (v1);
\draw (v4) to [out=35,in=-215] (s);
\draw (u) to [out=115,in=-115] (v4);
\draw (v4) -- (t);
\draw (u) -- (v3);
\draw (u) -- (v2);
\draw (t) -- (v2);
\draw (t) -- (v3);
\draw (s) -- (v2);
\draw (v1) to [out=100,in=0] (1.3,2.8) to [out=180, in=90] (.15,1.7) to [out=270,in=135] (v3);
\node at (2.3,3) {$e$};
\node at (1.4,3) {$f$};
\node at (.6,3.05) {$g$};
\node at (2.55,2.6) {$h$};
\node at (.05,0.5) {$q$};
\node at (1.5,-.6) {$r$};
\draw (s)--(t);
\node at (1.5,-1.2) {The underlying simple graph};
\node at (1.5,-1.6) {$G_0$ of $G^*$.};
\end{tikzpicture}
\quad
\begin{tikzpicture}[scale=1]
\node[draw,circle, minimum size = 13pt] (u) at (0,0) {$u$};
\node[draw,circle, minimum size = 13pt] (v1) at (3,0) {$v_1$};
\node[draw,circle, minimum size = 13pt] (v3) at (.5,1) {$v_3$};
\node[draw,circle, minimum size = 13pt] (v2) at (2,.5) {$v_2$};
\node[draw,circle, minimum size = 13pt] (t) at (1.4,2.3) {$t$};
\node[draw,circle, minimum size = 13pt] (v4) at (0,3.2) {$v_4$};
\node[draw,circle, minimum size = 13pt] (s) at (3,3.2) {$s$}; 
\draw (u) to [out=-45,in=225] (v1);
\draw (s) to [out=-65,in=65] (v1);
\draw (v4) to [out=35,in=-215] (s);
\draw (u) to [out=115,in=-115] (v4);
\draw (v4) -- (t);
\draw (u) -- (v3);
\draw (u) -- (v2);
\draw (t) -- (v2);
\draw (t) -- (v3);
\draw (s) -- (v2);
\draw (v1) to [out=100,in=0] (1.3,2.8) to [out=180, in=90] (.15,1.7) to [out=270,in=135] (v3);
\node at (2.3,3) {$1$}; 
\node at (1.4,3) {$1$}; 
\node at (.6,3.05) {$2$}; 
\node at (2.6,2.6) {$3$};  
\node at (1,1.5) {$2$}; 
\node at (1.5,1.5) {$3$}; 
\node at (.05,0.55) {$3$}; 
\node at (1.5,-.6) {$3$}; 
\node at (3.3,1.6) {$3$}; 
\node at (1.5,3.6) {$4$}; 
\node at (-.3,1.6) {$6$}; 
\node at (1,.4) {$6$}; 
\draw (s)--(t);
\node at (1.5,-1.6) {A representation of $G^*$.};
\end{tikzpicture}
\caption{\u{S}ir\'a\u{n}'s example \cite{siran}: $\Cr(G^*-e)<\Cr(G^*)=6$, $e$ is not in any Kuratowski subgraph of $G^*$, but is crossed in the optimal drawing shown.} 
\label{fig:siran}
\end{figure}

In 1983, \u{S}ir\'a\u{n} \cite{siran} gave an example of a graph $G^*$ that has a crossing-critical edge $e$ such that $e$ is not an edge of any of the  Kuratowski subgraphs of $G^*$. 
 Note, that, as $\Cr(G^*)=6$, $G^*$ contains a $6$-crossing-critical graph $H^*$. Moreover, $H^*$ must contain the edge $e$ that is still a non-Kuratowski edge in $H^*$. Therefore this example shows the existence of a $8$-crossing-critical graph with non-Kuratowski edges.
Figure~\ref{fig:siran}  shows an optimal drawing of this graph  $G^*$, in which $e$ is crossed, its underlying simple graph $G_0$ and a representation of $G^*$.
The edge $f$ on this figure is clearly a strong Kuratowski edge, as its removal renders the graph planar, and it is crossed in the optimal drawing of Figure~\ref{fig:siran}. Figure~\ref{fig:optdrawings} presents two more optimal drawings of this graph, and illustrates that no edge of $G^*$ is crossed in every optimal drawing.

\begin{figure}[http]
\center
\begin{tikzpicture}[scale=1,font=\tiny]
\node[draw,circle, fill=white, minimum size = 13pt] (u) at (0,0) {$u$};
\node[draw,circle, fill=white, minimum size = 13pt] (v1) at (-1,3) {$v_1$};
\node[draw,circle, fill=white, minimum size = 13pt] (v3) at (.5,1) {$v_3$};
\node[draw,circle, fill=white, minimum size = 13pt] (v2) at (2,.5) {$v_2$};
\node[draw,circle, fill=white, minimum size = 13pt] (t) at (1.4,2.3) {$t$};
\node[draw,circle, fill=white, minimum size = 13pt] (v4) at (0,3) {$v_4$};
\node[draw,circle, fill=white, minimum size = 13pt] (s) at (3,3) {$s$}; 
\draw (u) to [out=155,in=265] (v1);
\draw (u) to [out=165,in=255] (v1);
\draw (u) to [out=175,in=245] (v1);
\draw (s) to [out=105,in=55] (v1);
\draw (s) to [out=95,in=65] (v1);
\draw (s) to [out=85,in=75] (v1);
\draw (v4) to [out=55,in=-235] (s);
\draw (v4) to [out=45,in=-225] (s);
\draw (v4) to [out=35,in=-215] (s);
\draw (v4) to [out=25,in=-205] (s);
\draw (u) to [out=145,in=-145] (v4);
\draw (u) to [out=135,in=-135] (v4);
\draw (u) to [out=125,in=-125] (v4);
\draw (u) to [out=115,in=-115] (v4);
\draw (u) to [out=105,in=-105] (v4);
\draw (u) to [out=95,in=-95] (v4);
\draw (v4) to [out=-10,in=160] (t);
\draw (v4) to [out=-20,in=170] (t);
\draw (u) to [out=85,in=-135] (v3);
\draw (u) to [out=75,in=-125] (v3);
\draw (u) to [out=65,in=-115] (v3);
\draw (u) to [out=45,in=-165] (v2);
\draw (u) to [out=35,in=-155] (v2);
\draw (u) to [out=25,in=-145] (v2);
\draw (u) to [out=15,in=-135] (v2);
\draw (u) to [out=5,in=-125] (v2);
\draw (u) to [out=-5,in=-115] (v2);
\draw (t) to [out=-95,in=125] (v2);
\draw (t) to [out=-85,in=115] (v2);
\draw (t) to [out=-75,in=95] (v2);
\draw (t) to [out=-125,in=85] (v3);
\draw (t) to [out=-105,in=65] (v3);
\draw (s) to [out=225,in=65] (v2);
\draw (s) to [out=235,in=55] (v2);
\draw (s) to [out=245,in=45] (v2);
\draw (v1) -- (v3);
\node at (2.3,2.8) {$e$};
\node at (.3,1.6) {$f$};
\draw (s)--(t);
\end{tikzpicture} 
\quad
\begin{tikzpicture}[scale=1,font=\tiny]
\node[draw,circle, fill=white, minimum size = 13pt] (u) at (0,0) {$u$};
\node[draw,circle, fill=white, minimum size = 13pt] (v1) at (3,0) {$v_1$};
\node[draw,circle, fill=white, minimum size = 13pt] (v3) at (2,.5) {$v_3$};
\node[draw,circle, fill=white, minimum size = 13pt] (v2) at (.5,1) {$v_2$};
\node[draw,circle, fill=white, minimum size = 13pt] (t) at (1.4,2) {$t$};
\node[draw,circle, fill=white, minimum size = 13pt] (v4) at (0,3) {$v_4$};
\node[draw,circle, fill=white, minimum size = 13pt] (s) at (3,3) {$s$}; 
\draw (u) to [out=-45,in=225] (v1);
\draw (u) to [out=-35,in=215] (v1);
\draw (u) to [out=-25,in=205] (v1);
\draw (s) to [out=-45,in=45] (v1);
\draw (s) to [out=-55,in=55] (v1);
\draw (s) to [out=-65,in=65] (v1);
\draw (v4) to [out=55,in=-235] (s);
\draw (v4) to [out=45,in=-225] (s);
\draw (v4) to [out=35,in=-215] (s);
\draw (v4) to [out=25,in=-205] (s);
\draw (u) to [out=145,in=-145] (v4);
\draw (u) to [out=135,in=-135] (v4);
\draw (u) to [out=125,in=-125] (v4);
\draw (u) to [out=115,in=-115] (v4);
\draw (u) to [out=105,in=-105] (v4);
\draw (u) to [out=95,in=-95] (v4);
\draw (v4) to [out=-10,in=160] (t);
\draw (v4) to [out=-20,in=170] (t);
\draw (u) to [out=25,in=-135] (v3);
\draw (u) to [out=15,in=-125] (v3);
\draw (u) to [out=5,in=-115] (v3);
\draw (u) to [out=85,in=-165] (v2);
\draw (u) to [out=75,in=-155] (v2);
\draw (u) to [out=65,in=-145] (v2);
\draw (u) to [out=55,in=-135] (v2);
\draw (u) to [out=45,in=-125] (v2);
\draw (u) to [out=35,in=-115] (v2);
\draw (t) to [out=-125,in=75] (v2);
\draw (t) to [out=-115,in=65] (v2);
\draw (t) to [out=-105,in=55] (v2);
\draw (t) to [out=-85,in=85] (v3);
\draw (t) to [out=-75,in=65] (v3);
\draw (s) to [out=-95,in=0] (v2);
\draw (s) to [out=175,in=115] (v2);
\draw (s) to [out=185,in=105] (v2);
\draw (v1) -- (v3);
\node at (2.1,2.6) {$e$};
\node at (2.6,.4) {$f$};
\draw (s)--(t);
\end{tikzpicture} 
\caption{Optimal drawings of the graph $G^*$ in  \u{S}ir\'a\u{n}'s example \cite{siran}. Note that the set of edges crossed in both of these drawings is empty.} 
\label{fig:optdrawings}
\end{figure}

Based on \u{S}ir\'a\u{n}'s example, we consider the following
\begin{problem}\label{pr:main}   Find the answer to the following questions:
\begin{enumerate}[label={\upshape (\arabic*)}]
\item If an edge $e$ of a graph $G$ is crossed in every optimal drawing of $G$, must $e$ be a Kuratowski edge?
\item If $G$ has strong Kuratowski edges, must $G$ have an optimal drawing in which some strong Kuratowski edge is crossed?
\item If the edge $e$ of a graph $G$ is not crossed in any optimal drawing of $G$ and is not a Kuratowski edge, is it true that $e$ cannot be crossing-critical?
\end{enumerate}
\end{problem}
We consider the corresponding questions for the rectilinear drawings of simple graphs as well.

We modify \u{S}ir\'a\u{n}'s example by changing the edge-multiplicities of $G^*$ judiciously to create graphs $G_1,G_2,G_3$ (see Figure~\ref{fig:siranmod}) such that each have the same underlying simple graph $G_0$ that give the following (perhaps surprising) answers to our questions:
\begin{claim}\label{cl:main} The answer to all questions in {\rm{Problem~\ref{pr:main}}} is in the negative (see Figure~\ref{fig:siranmod} for the examples), i.e.,
\begin{enumerate}[label={\upshape(\arabic*)}]
\item A graph can have an edge  that is crossed in every optimal drawing, but is not a Kuratowski sedge. (An example is the graph $G_1$ and  the unique $s$-$t$ edge $e$.)
\item A graph with strong  Kuratowski edges can be such that no strong Kuratowski edge is crossed in any optimal drawing of the graph. (An example is the graph $G_2$ and the unique strong Kuratowski edge $f$.)
\item A graph can have an edge that is crossing-critical, but this edge is not a Kuratowski edge and is not crossed in any optimal drawing of the graph. 
(An example is the graph $G_3$ and the unique $s$-$t$ edge $e$.)
\end{enumerate}
Furthermore, all three negative answers can be provided with simple graphs. Moreover, these simple graphs provide the corresponding examples for the rectilinear crossing number as well.
\end{claim}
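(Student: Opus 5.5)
The plan is to reduce everything to a weighted crossing-number computation on the fixed simple graph $G_0$. First I determine the Kuratowski structure of $G_0$, which is the same for $G_1,G_2,G_3$ since Kuratowski subgraphs are simple. By \u{S}ir\'a\u{n}'s analysis, the $s$-$t$ edge $e$ lies in no Kuratowski subgraph. I will re-verify this by a short case check: if $e$ were in a subdivided $K_5$ or $K_{3,3}$, then $s$ and $t$ would have to be main vertices or lie on a path. The degrees in $G_0$ and the $2$-separations, notably $\{u,v_4\}$-type separations, then rule this out. I will also check that $G_0-f$ is planar, so $f$ lies in every Kuratowski subgraph, and that $f$ is the only such edge. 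The latter follows by exhibiting two Kuratowski subgraphs whose common edge set is $\{f\}$.

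Next, for multigraphs I use the representation $(G_0,w)$. A drawing of $G$ can be assumed to draw all copies of an edge of $G_0$ parallel, following the cheapest copy. So $\Cr(G)$ equals the minimum over drawings $\mathcal D$ of $G_0$ of $\sum w(a)w(b)$, summed over crossing pairs $a,b$. The multiplicities of $G_1,G_2,G_3$ (from Figure~\ref{fig:siranmod}) are chosen so that heavy edges are essentially never crossed. This reduces the relevant drawings to a small family: those crossing only light edges among $e,f,g,h,q,r,\dots$. For each $G_i$ I compute a lower bound by a case analysis on which light edges cross. Each Kuratowski subgraph must contribute a crossing, and disjoint or weight-packed families of Kuratowski subgraphs give additive bounds. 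I then match the bound with an explicit drawing in the style of Figures~\ref{fig:siran} and~\ref{fig:optdrawings}.

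The three items then follow from this analysis.
\begin{enumerate}[label={\upshape(\arabic*)}]
\item For $G_1$, I show every drawing avoiding a crossing on $e$ costs strictly more than $\Cr(G_1)$. Since $e$ is non-Kuratowski, this gives the claim.
\item For $G_2$, I show every drawing in which $f$ is crossed costs more than the optimum, which is attained by a drawing with $f$ uncrossed.
\item For $G_3$, I show $\Cr(G_3-e)<\Cr(G_3)$ by exhibiting a cheap drawing of $G_3-e$. I also show that every drawing crossing $e$ costs more than some drawing not crossing $e$.
\end{enumerate}

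For simple graphs, I replace each edge of multiplicity $k$ by $k$ internally disjoint paths of length $2$. These subdivided parallel classes behave like weight-$k$ bundles: a redrawing argument moves all paths along the cheapest one. The Kuratowski structure is unchanged for the original edges, because the new degree-$2$ vertices only subdivide, and parallel paths give no new branch vertices of degree $\ge3$ beyond what $G_0$ allows. For the rectilinear statements, I check that each optimal drawing used can be realized with straight lines once bundles are thin. Then $\lcr=\Cr$ for these graphs, and the lower bounds transfer because $\lcr\ge\Cr$.

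The main obstacle is the lower-bound case analysis, especially in (3), where I must compare $\Cr(G_3)$ with drawings crossing $e$. I would first attempt this by showing that in an optimal drawing the heavy planar "frame" ($u,v_4,s,v_1$ with their multiedges) is drawn without crossings. That fixes the rotation, after which only a few placements of $t,v_2,v_3$ remain to enumerate.
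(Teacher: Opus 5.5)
There is a genuine gap, and it is the step you yourself flag as the main obstacle. Nothing in the proposal actually determines $\Cr(G_i)$ or which edges are crossed in optimal drawings, and the one lower-bound tool you name cannot do it.

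Bounds from packing, or even hitting, Kuratowski subgraphs charge crossings only to Kuratowski edges. They can therefore never force a crossing on the non-Kuratowski edge $e$, which is the whole content of item (1). They are also numerically far too weak. In $G_1$, letting $f$ cross the three copies of $g$ already meets every Kuratowski subgraph, since each contains $f$ and a copy of $g$, at cost $3$, while $\Cr(G_1)=7$.

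The missing idea is topological rigidity.
\begin{itemize}
\item The drawings $\mathcal{D}_0,\mathcal{D}_1$ give $\Cr(G)\le\kappa=\min\bigl(w(g)w(h),\,w(f)(w(e)+w(g)+w(h))\bigr)$.
\item Parallel copies carry equally many crossings in an optimal drawing, so every class of multiplicity $>\kappa$ is uncrossed. Hence the only possible crossing pairs are $\{g,h\},\{f,e\},\{f,g\},\{f,h\}$.
\item Suppose some copy of $g$ and some copy of $h$ do not cross. Choose one copy per class, with these two included. This gives a drawing of $G_0$ in which $G_0-f$, a subdivided $K_5$ minus an edge, is uncrossed.
\item By Whitney, that embedding is unique. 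In it, $v_1$ and $v_3$ share no face, and the only route between their faces avoiding heavy edges crosses $h$, $e$ and $g$.
\end{itemize}
With the bundling argument you get a dichotomy. An optimal drawing costs at least $w(g)w(h)$ if some copies of $g$ and $h$ cross. Otherwise $f$ crosses $e,g,h$, and the drawing costs at least $w(f)(w(e)+w(g)+w(h))$.

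The numbers $9>7$, $4<5$, $21<22$ then settle the three items, with heavy multiplicities $8,5,22$ exceeding $\kappa=7,4,21$. For (3) you also have $\Cr(G_3-e)\le 2(3+7)=20<21$.

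Your frame idea can be turned into an equivalent argument, but only by freezing all heavy edges. These form the two $4$-cycles $uv_1sv_4$ and $uv_3tv_2$ sharing $u$. Then $e,f,g,h$ all lie in the single face bounded by both cycles. The two possible cyclic orders of its boundary give exactly ``$f$ crosses $e,g,h$'' and ``$g$ crosses $h$''. The cycle $uv_4sv_1$ alone fixes too little.

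Your Kuratowski step is also wrong as stated. The Kuratowski structure is not ``the same for $G_1,G_2,G_3$''. $G_0$ has exactly two Kuratowski subgraphs, $G_0-\{e,q\}$ and $G_0-\{e,r\}$. So its strong Kuratowski edges are all nine edges other than $e,q,r$, including $g$ and $h$, and no two Kuratowski subgraphs of $G_0$ meet only in $f$. That $f$ is the unique strong Kuratowski edge of $G_2$ comes from the multiplicities. The other eight such edges have $w_2\ge 2$, so choosing different copies gives two Kuratowski subgraphs of $G_2$ meeting only in $f$.

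Likewise, $G_0$ is $3$-connected, so there are no ``$\{u,v_4\}$-type $2$-separations''. What works is a vertex/edge count of subdivided $K_{3,3}$'s inside $G_0$. There are only three vertices of degree $4$, so no subdivided $K_5$. $G_0-v_i$ is excluded by the triangle $uv_1v_3$. The remaining option is $G_0$ minus two edges, and one of them is forced to be $e$.

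Finally, when passing to simple graphs, do not subdivide the multiplicity-one edges. If $e$ became a path, its single crossing in $G_1$ could slide between the two halves. Then neither half would be crossed in every optimal drawing.
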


We note that the ``furthermore" part of the previous claim is obvious: in each of the $G_i$, subdivide all but one of the parallel copies of each $x$-$y$ edge to obtain a simple graph $G_i^*$.  The edge $e$ does not have parallel copies, so it is never subdivided, and the edge $f$ does not have parallel copies in $G_2$. Hence these simple graphs also have the required properties.
The ``moreover" part follows from the fact that we have straight line drawings of the underlying simple graphs $G_i$ (Figure~\ref{fig:help}) used in the proof, and
the simple graphs $G_i^*$ have corresponding straight line drawing: draw the not subdivided copy of a multi-edge as a straight line, and place the subdivided 
copies of this edges of the multi-edges close to the original edge.

\begin{figure}[http]
\center
\begin{tikzpicture}[scale=.9, font=\tiny]
\node[draw,circle, fill=white, minimum size = 13pt] (u) at (0,0) {$u$};
\node[draw,circle, fill=white, minimum size = 13pt] (v1) at (3,0) {$v_1$};
\node[draw,circle, fill=white, minimum size = 13pt] (v3) at (.5,1) {$v_3$};
\node[draw,circle, fill=white, minimum size = 13pt] (v2) at (2,.5) {$v_2$};
\node[draw,circle, fill=white, minimum size = 13pt] (t) at (1.4,2.3) {$t$};
\node[draw,circle, fill=white, minimum size = 13pt] (v4) at (0,3) {$v_4$};
\node[draw,circle, fill=white, minimum size = 13pt] (s) at (3,3) {$s$}; 
\draw (u) to [out=-45,in= 180] (1.5,-.7) to [out=0,in=225] (v1);
\draw (u) to [out=-55,in= 180] (1.5,-.8) to [out=0,in=235] (v1);
\draw (u) to [out=-65,in= 180] (1.5,-.9) to [out=0,in=245] (v1);
\draw (u) to [out=-75,in= 180] (1.5,-1) to [out=0,in=255] (v1);
\draw (u) to [out=-85,in= 180] (1.5,-1.1) to [out=0,in=265] (v1);
\draw (u) to [out=-95,in= 180] (1.5,-1.2) to [out=0,in=275] (v1);
\draw (u) to [out=-105,in= 180] (1.5,-1.3) to [out=0,in=285] (v1);
\draw (u) to [out=-115,in= 180] (1.5,-1.4) to [out=0,in=295] (v1);
\draw (u) to [out=105,in=-155] (v3);
\draw (u) to [out=95,in=-145] (v3);
\draw (u) to [out=85,in=-135] (v3);
\draw (u) to [out=75,in=-125] (v3);
\draw (u) to [out=65,in=-115] (v3);
\draw (u) to [out=55,in=-105] (v3);
\draw (u) to [out=45,in=-95] (v3);
\draw (u) to [out=35,in=-85] (v3);
\draw (u) to [out=35,in=-165] (v2);
\draw (u) to [out=25,in=-155] (v2);
\draw (u) to [out=15,in=-145] (v2);
\draw (u) to [out=5,in=-135] (v2);
\draw (u) to [out=-5,in=-125] (v2);
\draw (u) to [out=-15,in=-115] (v2);
\draw (u) to [out=-25,in=-105] (v2);
\draw (u) to [out=-35,in=-95] (v2);
\draw (u) to [out=175,in=-175] (v4);
\draw (u) to [out=165,in=-165] (v4);
\draw (u) to [out=155,in=-155] (v4);
\draw (u) to [out=145,in=-145] (v4);
\draw (u) to [out=135,in=-135] (v4);
\draw (u) to [out=125,in=-125] (v4);
\draw (u) to [out=115,in=-115] (v4);
\draw (u) to [out=105,in=-105] (v4);
\draw (s) to [out=-85,in=85] (v1);
\draw (s) to [out=-75,in=75] (v1);
\draw (s) to [out=-65,in=65] (v1);
\draw (s) to [out=-55,in=55] (v1);
\draw (s) to [out=-45,in=45] (v1);
\draw (s) to [out=-35,in=35] (v1);
\draw (s) to [out=-25,in=25] (v1);
\draw (s) to [out=-15,in=15] (v1);
\draw (v4) to [out=45,in= 180] (1.5,3.7) to [out=0,in=-225] (s);
\draw (v4) to [out=55,in= 180] (1.5,3.8) to [out=0,in=-235] (s);
\draw (v4) to [out=65,in= 180] (1.5,3.9) to [out=0,in=-245] (s);
\draw (v4) to [out=75,in= 180] (1.5,4) to [out=0,in=-255] (s);
\draw (v4) to [out=85,in= 180] (1.5,4.1) to [out=0,in=-265] (s);
\draw (v4) to [out=95,in= 180] (1.5,4.2) to [out=0,in=-275] (s);
\draw (v4) to [out=105,in= 180] (1.5,4.3) to [out=0,in=-285] (s);
\draw (v4) to [out=115,in= 180] (1.5,4.4) to [out=0,in=-295] (s);
\draw (v4) to [out=0,in=130] (t);
\draw (v4) to [out=-10,in=140] (t);
\draw (v4) to [out=-20,in=150] (t);
\draw (t) to [out=-95,in=145] (v2);
\draw (t) to [out=-85,in=135] (v2);
\draw (t) to [out=-75,in=125] (v2);
\draw (t) to [out=-65,in=115] (v2);
\draw (t) to [out=-55,in=105] (v2);
\draw (t) to [out=-45,in=95] (v2);
\draw (t) to [out=-35,in=85] (v2);
\draw (t) to [out=-25,in=75] (v2);
\draw (t) to [out=-165,in=125] (v3);
\draw (t) to [out=-155,in=115] (v3);
\draw (t) to [out=-145,in=105] (v3);
\draw (t) to [out=-135,in=95] (v3);
\draw (t) to [out=-125,in=85] (v3);
\draw (t) to [out=-115,in=75] (v3);
\draw (t) to [out=-105,in=65] (v3);
\draw (t) to [out=-95,in=55] (v3);
\draw (s) to [out=225,in=65] (v2);
\draw (s) to [out=235,in=55] (v2);
\draw (s) to [out=245,in=45] (v2);
\draw (v1) to [out=100,in=0] (1.3,2.8) to [out=180, in=90] (.15,1.7) to [out=270,in=135] (v3);
\node at (2.3,2.9) {$e$};
\node at (1.4,3) {$f$};
\draw (s)--(t);
\node at (1.5,-1.7) {\normalsize{The graph $G_1$}};
\node at (1.5,-2.2) {\normalsize{$w_{1}(f)=w_{1}(e)=1$,}};
\node at (1.5,-2.7) {\normalsize{$w_{1}(g)=w_{1}(h)=3$}};
\node at (1.5,-3.2) {\normalsize{$z\notin\{e,f,g,h\}$: $w_{1}(z)=8$}};
\end{tikzpicture} 
\,\,\,
\begin{tikzpicture}[scale=.9, font=\tiny]
\node[draw,circle, fill=white, minimum size = 13pt] (u) at (0,0) {$u$};
\node[draw,circle, fill=white, minimum size = 13pt] (v1) at (3,0) {$v_1$};
\node[draw,circle, fill=white, minimum size = 13pt] (v3) at (.5,1) {$v_3$};
\node[draw,circle, fill=white, minimum size = 13pt] (v2) at (2,.5) {$v_2$};
\node[draw,circle, fill=white, minimum size = 13pt] (t) at (1.4,2.3) {$t$};
\node[draw,circle, fill=white, minimum size = 13pt] (v4) at (0,3) {$v_4$};
\node[draw,circle, fill=white, minimum size = 13pt] (s) at (3,3) {$s$}; 
\draw (u) to [out=-45,in= 180] (1.5,-.7) to [out=0,in=225] (v1);
\draw (u) to [out=-55,in= 180] (1.5,-.8) to [out=0,in=235] (v1);
\draw (u) to [out=-65,in= 180] (1.5,-.9) to [out=0,in=245] (v1);
\draw (u) to [out=-75,in= 180] (1.5,-1) to [out=0,in=255] (v1);
\draw (u) to [out=-85,in= 180] (1.5,-1.1) to [out=0,in=265] (v1);
\draw (u) to [out=95,in=-145] (v3);
\draw (u) to [out=85,in=-135] (v3);
\draw (u) to [out=75,in=-125] (v3);
\draw (u) to [out=65,in=-115] (v3);
\draw (u) to [out=55,in=-105] (v3);
\draw (u) to [out=25,in=-155] (v2);
\draw (u) to [out=15,in=-145] (v2);
\draw (u) to [out=5,in=-135] (v2);
\draw (u) to [out=-5,in=-125] (v2);
\draw (u) to [out=-15,in=-115] (v2);
\draw (u) to [out=145,in=-145] (v4);
\draw (u) to [out=135,in=-135] (v4);
\draw (u) to [out=125,in=-125] (v4);
\draw (u) to [out=115,in=-115] (v4);
\draw (u) to [out=105,in=-105] (v4);
\draw (s) to [out=-85,in=85] (v1);
\draw (s) to [out=-75,in=75] (v1);
\draw (s) to [out=-65,in=65] (v1);
\draw (s) to [out=-55,in=55] (v1);
\draw (s) to [out=-45,in=45] (v1);
\draw (v4) to [out=45,in= 180] (1.5,3.7) to [out=0,in=-225] (s);
\draw (v4) to [out=55,in= 180] (1.5,3.8) to [out=0,in=-235] (s);
\draw (v4) to [out=65,in= 180] (1.5,3.9) to [out=0,in=-245] (s);
\draw (v4) to [out=75,in= 180] (1.5,4) to [out=0,in=-255] (s);
\draw (v4) to [out=85,in= 180] (1.5,4.1) to [out=0,in=-265] (s);
\draw (v4) to [out=-10,in=140] (t);
\draw (v4) to [out=-20,in=150] (t);
\draw (t) to [out=-85,in=135] (v2);
\draw (t) to [out=-75,in=125] (v2);
\draw (t) to [out=-65,in=115] (v2);
\draw (t) to [out=-55,in=105] (v2);
\draw (t) to [out=-45,in=95] (v2);
\draw (t) to [out=-155,in=115] (v3);
\draw (t) to [out=-145,in=105] (v3);
\draw (t) to [out=-135,in=95] (v3);
\draw (t) to [out=-125,in=85] (v3);
\draw (t) to [out=-115,in=75] (v3);
\draw (s) to [out=235,in=55] (v2);
\draw (s) to [out=245,in=45] (v2);
\draw (v1) to [out=100,in=0] (1.3,2.8) to [out=180, in=90] (.15,1.7) to [out=270,in=135] (v3);
\node at (2.3,2.9) {$e$};
\node at (1.4,3) {$f$};
\draw (s)--(t);
\node at (1.5,-1.7) {\normalsize{The graph $G_2$}};
\node at (1.5,-2.2) {\normalsize{$w_2(f)=w_2(e)=1$,}};
\node at (1.5, -2.7) {\normalsize{$w_{2}(g)=w_2(h)=2$}};
\node at (1.5,-3.2) {\normalsize{$z\notin\{e,f,g,h\}$: $w_{2}(z)=5$}};
\end{tikzpicture} 
\,\,\,
\begin{tikzpicture}[scale=.9, font=\tiny]
\node[draw,circle, fill=white, minimum size = 13pt] (u) at (0,0) {$u$};
\node[draw,circle, fill=white, minimum size = 13pt] (v1) at (3,0) {$v_1$};
\node[draw,circle, fill=white, minimum size = 13pt] (v3) at (.5,1) {$v_3$};
\node[draw,circle, fill=white, minimum size = 13pt] (v2) at (2,.5) {$v_2$};
\node[draw,circle, fill=white, minimum size = 13pt] (t) at (1.4,2.3) {$t$};
\node[draw,circle, fill=white, minimum size = 13pt] (v4) at (0,3.2) {$v_4$};
\node[draw,circle, fill=white, minimum size = 13pt] (s) at (3,3.2) {$s$}; 
\draw (u) to [out=-45,in=225] (v1);
\draw (s) to [out=-65,in=65] (v1);
\draw (v4) to [out=35,in=-215] (s);
\draw (u) to [out=115,in=-115] (v4);
\draw (v4) -- (t);
\draw (u) -- (v3);
\draw (u) -- (v2);
\draw (t) -- (v2);
\draw (t) -- (v3);
\draw (s) -- (v2);
\draw (v1) to [out=100,in=0] (1.3,2.8) to [out=180, in=90] (.15,1.7) to [out=270,in=135] (v3);
\node at (2.3,3) {$1$}; 
\node at (1.4,3) {$2$}; 
\node at (.6,3.05) {$3$}; 
\node at (2.6,2.6) {$7$};  
\node at (0.7,1.75) {$22$}; 
\node at (1.4,1.45) {$22$}; 
\node at (.,0.6) {$22$}; 
\node at (1.5,-.6) {$22$}; 
\node at (3.2,1.6) {$22$}; 
\node at (1.5,3.6) {$22$}; 
\node at (-.2,1.6) {$22$}; 
\node at (1,.5) {$22$}; 
\draw (s)--(t);
\node at (1.5,-1.7) {\normalsize{A representation of $G_3$}};
\node at (1.5,-2.2) {\normalsize{$w_{3}(e)=1$, $w_3(f)=2$,}};
\node at (1.5,-2.7) {\normalsize{ $w_{3}(g)=3$, $w_{3}(h)=7$}};
\node at (1.5,-3.2) {\normalsize{$z\notin\{e,f,g,h\}$: $w_{3}(z)=22$}};
\end{tikzpicture}
\caption{The graphs $G_1$ and $G_2$ and a representation of $G_3$. For $i\in\{1,2,3\}$ the representation of $G_i$ is $(G_0,w_i)$.
$e,f,g,h$ are the edges of the underlying graph $G_0$ (see Figure~\ref{fig:siran}).}
\label{fig:siranmod}
\end{figure}

We will show that the graphs $G_1,G_2,G_3$ and the edges $e,f$ satisfy the properties claimed in Claim~\ref{cl:main} by first finding the edges of Kuratowski subgraphs  
of the graphs in
Claim~\ref{cl:kurat}, and then (after some preliminary work) verifying the rest of Claim~\ref{cl:main} in Claim~\ref{cl:drawing}.

\section{Proof of our claims}

First we determine which  edges belong to  Kuratowski subgraphs of graphs with underlying simple graph $G_0$.

\begin{claim}\label{cl:kurat} Let $G$ be a graph with representation $(G_0,w)$.
The non-Kuratowski edges of $G$ are precisely the copies of $e$ in $G$. An edge
$x$ is a strong Kuratowski edge of $G$ precisely when $x$ is a copy of some $y\in E(G_0)\setminus\{e,q,r\}$ with $w(y)=1$.
\end{claim}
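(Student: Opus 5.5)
All Kuratowski subgraphs are simple, so the whole question reduces to the underlying simple graph $G_0$. A simple subgraph of $G$ is obtained from a subgraph $K$ of $G_0$ by choosing one copy of each edge of $K$. Hence an edge $x$ of $G$ is a Kuratowski edge exactly when the edge $y\in E(G_0)$ it copies lies in some Kuratowski subgraph of $G_0$. Likewise, $x$ is a strong Kuratowski edge exactly when $y$ lies in every Kuratowski subgraph of $G_0$ and $w(y)=1$. Indeed, if $w(y)\ge 2$ we may swap $x$ for another copy of $y$, and if $w(y)=1$ the copy is forced. So it suffices to prove two things about $G_0$:
\begin{enumerate}[label={\upshape(\roman*)}]
\item $e$ lies in no Kuratowski subgraph of $G_0$, while every other edge lies in some Kuratowski subgraph;
\item the edges lying in every Kuratowski subgraph of $G_0$ are exactly those of $E(G_0)\setminus\{e,q,r\}$.
\end{enumerate}

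First I list $G_0$ explicitly. Its vertices are $u,v_1,v_2,v_3,v_4,s,t$. Its edges are $uv_1=r$, $uv_2$, $uv_3=q$, $uv_4$, $v_1s$, $v_4s$, $v_4t=g$, $tv_2$, $tv_3$, $sv_2=h$, $v_1v_3=f$ and $st=e$, so $12$ edges on $7$ vertices. Only $u,s,t$ have degree $4$, so there is no subdivided $K_5$, and every Kuratowski subgraph is a subdivided $K_{3,3}$. Next I exhibit two such subgraphs, $K_q=G_0-\{e,q\}$ and $K_r=G_0-\{e,r\}$. Each has $10$ edges on $7$ vertices, and I will check that in each one exactly one vertex has degree $2$ and suppressing it leaves $K_{3,3}$; I will write down the bipartition explicitly. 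These two subgraphs cover every edge except $e$, which gives the second half of (i). Their union misses only $e$, and $q\notin K_q$, $r\notin K_r$, so $q$ and $r$ are not in every Kuratowski subgraph.

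For the remaining edges I use the remark in the introduction: an edge $y$ is in every Kuratowski subgraph precisely when $G_0$ is nonplanar but $G_0-y$ is planar. So for each $y\in E(G_0)\setminus\{e,q,r\}$ (nine edges) I will exhibit a plane drawing of $G_0-y$. These are routine; for $y=f$ the drawing in Figure~\ref{fig:siran} with $f$ removed already works.

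The remaining point, and the main obstacle, is that $e$ lies in no subdivided $K_{3,3}$. Suppose $H$ were one containing $e$. By (ii), which is already established at this point, $H$ contains the nine edges of $E(G_0)\setminus\{e,q,r\}$, and these span all seven vertices. Adding $e$ gives $H$ at least $10$ edges on $7$ vertices. But a subdivided $K_{3,3}$ on $7$ vertices has exactly $10$ edges. Hence $H=G_0-\{q,r\}$. In this graph $u$ has degree $2$ and $v_1,v_3$ have degree $2$, which is three degree-$2$ vertices on $7$ vertices, whereas a $7$-vertex subdivided $K_{3,3}$ has exactly one. This contradiction shows $e$ is a non-Kuratowski edge, completing the claim.
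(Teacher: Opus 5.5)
Your proof is correct, but it takes a different route from the paper's. The paper draws nothing. It classifies \emph{all} Kuratowski subgraphs of $G_0$ by counting. A subdivided $K_{3,3}$ with $\alpha$ subdividing vertices has $6+\alpha$ vertices and $9+\alpha$ edges, so $\alpha\in\{0,1\}$. The case $\alpha=0$ would be $G_0-v_i$ with every neighbor of $v_i$ of degree $4$, i.e.\ $i\in\{2,4\}$, and then the triangle $uv_1v_3$ survives. For $\alpha=1$, the two deleted edges must lower the degree of each of $u,s,t$, so one of them joins two vertices of $\{u,s,t\}$ and must be $e$; triangle-freeness then forces the other to be $q$ or $r$. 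Hence $G_0-\{e,q\}$ and $G_0-\{e,r\}$ are the only Kuratowski subgraphs, and both the Kuratowski and the strong Kuratowski edge sets can be read off at once.

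You instead prove that the nine edges are strong via planarity of each $G_0-y$ (the easy direction of Kuratowski's theorem). Only afterwards do you exclude $e$, by an edge count that uses this strongness. The nine planarity claims do hold; for instance $G_0-uv_2$ becomes a triangular prism after suppressing $v_2$. But they are the bulk of your argument, and you leave eight of them unwritten.

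Your last step already contains the decisive fact: a $7$-vertex subdivided $K_{3,3}$ has exactly $10$ edges and one vertex of degree $2$. Apply it to an arbitrary Kuratowski subgraph, not one assumed to contain $e$, and add the $6$-vertex case. This yields the paper's classification and makes all nine drawings unnecessary. Your version is certificate-based and easy to check mechanically once the drawings are supplied; the paper's is shorter and self-contained.
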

\begin{proof}
Let $G$ be a graph with underlying graph $G_0$.
It is clear that the set of Kuratowski edges of $G$ is the set of copies of the Kuratowski edges of $G_0$, and a copy of an edge $y\in E(G_0)$ is a strong Kuratowski edge of $G$ precisely when $w(y)=1$ and $y$ is a strong Kuratowski edge of $G_0$. Therefore we need to show that the set of Kuratowski edges of $G_0$ is $E(G_0)\setminus\{e\}$ and the set of strong Kuratowski edges of $G_0$ is $E(G_0)\setminus\{e,q,r\}$.

$G_0$ has $3$ vertices of degree $4$ ($u$, $s$ and $t$) and $12$ edges.

As any subdivision of $K_5$ has $5$ vertices of degree $4$,  any Kuratowski subgraph of $G_0$ is a subdivided $K_{3,3}$.

Let $X$ be a Kuratowski subgraph of $G$. As $X$ is a subdivided $K_{3,3}$ with $\alpha\ge 0$ subdividing vertices, $X$ has $6+\alpha$ vertices and $9+\alpha$ edges.
As $G_0$ has $7$ vertices and $12$ edges, either $\alpha=0$ and $X$ is obtained from $G_0$ by removing a vertex of degree $3$, or $\alpha=1$ and $X$ is obtained from
$G_0$ by removing $2$ edges.

If $\alpha=0$, then for some $i\in\{1,2,3,4\}$ we have  $X=G_0-v_i$. As $X$ is a bipartite $3$-regular graph, the neighbors of $v_i$ in $G_0$ all have degree $4$, therefore $i\in\{2,4\}$.
However, if $i\in\{2,4\}$ then $X=G_0-v_i$ contains a triangle on $u,v_1,v_3$, contradicting the fact that $X$ bipartite. Therefore $\alpha=1$, and $X$ is obtained from $G_0$ by removing exactly two edges.

As the removal of these two edges results in a subdivision of a $3$-regular graph, we have that the $4$ endvertices of these $2$ edges must be $s,t,u,v_i$ for some $i\in\{1,2,3,4\}$. Therefore, one of the removed edges must have both endpoints in $\{s,t,u\}$. The only such edge is $e$, which is not in any Kuratowski subgraph of $G_0$. So, $X=G-\{e,z\}$ for some edge $z$ incident upon $u$.
Since $X$ must be triangle-free, $z$ is an edge of the triangle on $u,v_1,v_3$, so $x\in\{q,r\}$.
 
 Therefore we have that if $X$ is a Kuratowski subgraph of $G_0$, then $X\in\{G_0-\{e,q\},G_0-\{e,r\}\}$, and it is easy to check that both of these choices result in a Kuratowski subgraph.
 This finishes our proof.
\end{proof}


Next we consider the drawings.
It is well-known \cite{schaefer} that any optimal drawing of a graph must be \emph{good}, i.e., it must be such that any two arcs representing edges in the drawing do not touch each other, they cross at most once, and arcs representing edges that share an endvertex do not cross. 

If $H$ is an  edge-weighted simple graph with edge-weights $w:E(G)\rightarrow\mathbb{R}^+$, \emph{the weighted crossing number $\Cr_w(\mathcal{D})$ of a 
drawing $\mathcal{D}$} is
$$\Cr_w(\mathcal{D})=\sum_{e,f\in E(G)}w(e)\cdot w(f)\cdot|\mathcal{D}(e)^0\cap\mathcal{D}(f)^0|.$$
and $\Cr_w(H)$ is the minimum of the weighted crossing numbers of good drawings of $H$.

We will make use of the following easy claim:

\begin{claim}\label{cl:easy} Let $G$ be any graph with representation $(H,w)$, $\mathcal{D}$ be any optimal drawing of $G$.
The following statements are true:
\begin{enumerate}[label={\upshape (\roman*)}]
\item\label{case:HtoG} For any good drawing $\mathcal{D}_H$ of $H$, there is a drawing $\mathcal{D}_{H,G}$ of $G$ in which two edges of $G$ cross precisely when they are copies of two edges that cross in $\mathcal{D}_H$. In particular
$\Cr(G)\le\Cr_w(\mathcal{D}_H)$.
\item\label{case:same} For any edge $e$ of $H$, its parallel copies in $G$ have the same number of crossings in $\mathcal{D}$.
\item\label{case:nolargecross} If $z$ is an edge of $H$ such that $w(z)>\Cr(G)$ then no copy of $z$ has a crossing in $\mathcal{D}$.
\item\label{case:psum} Let $e\in E(H)$, $Y\subseteq E(H)$ such that $G$ has a copy $e'$ of $e$ such that
in $\mathcal{D}$ the edge $e'$ crosses at least one copy of every edge in $Y$.
Then $w(e)\sum_{y\in Y} w(y)\le\Cr(G)$.
\item\label{case:reduction} $\Cr(G)=\Cr_w(H)$.
\end{enumerate} 
\end{claim}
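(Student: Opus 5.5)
The plan is to prove the five items in order. The main tool is a ``redrawing along the cheapest copy'' argument, which makes parallel copies interchangeable.

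For \ref{case:HtoG}, I would take the good drawing $\mathcal{D}_H$ and replace each edge $z$ of $H$ by $w(z)$ arcs running in a thin tube around $\mathcal{D}_H(z)$. The tubes are chosen so thin that they meet only near the crossing points of $\mathcal{D}_H$ and near common endvertices. Each crossing of $z$ and $z'$ in $\mathcal{D}_H$ then becomes exactly $w(z)w(z')$ crossings between copies. Parallel copies can be drawn disjoint, and copies of adjacent edges do not cross, because $\mathcal{D}_H$ is good. This gives $\Cr(G)\le\Cr_w(\mathcal{D}_H)$.

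For \ref{case:same}, take an $x$-$y$ edge of $H$ and let $e_1$ be a copy with the fewest crossings in $\mathcal{D}$. Redraw every other copy $e_j$ alongside $e_1$. This does not increase the crossing count, since $e_j$ now has exactly as many crossings as $e_1$, and parallel copies drawn along the same arc do not cross each other. If some copy had strictly more crossings than $e_1$, the redrawing would strictly decrease the count, contradicting optimality. Hence all copies have the same number of crossings. One small point: a crossing between $e_j$ and another copy $e_k$ must not be double counted. Since $e_1$ has the minimum count and the redrawing removes all crossings among copies, this does not break the inequality.

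For \ref{case:nolargecross}, suppose some copy of $z$ has a crossing in $\mathcal{D}$. By \ref{case:same}, every one of the $w(z)$ copies has at least one crossing. These crossings are distinct, since $\mathcal{D}$ is good and parallel copies share endvertices and so do not cross each other. Thus $\Cr(G)\ge w(z)>\Cr(G)$, a contradiction.

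For \ref{case:psum}, the copy $e'$ crosses, for each $y\in Y$, some copy of $y$. By the argument of \ref{case:same} applied to the $y$-class, I can reroute all copies of $y$ along that copy, so that every copy of $y$ crosses $e'$. Applying \ref{case:same} again to the $e$-class, reroute all copies of $e$ along $e'$. By optimality and the equality case of \ref{case:same}, these reroutings keep the drawing optimal. In the new optimal drawing each copy of $e$ crosses each copy of each $y\in Y$, which gives at least $w(e)\sum_{y\in Y}w(y)$ distinct crossings, so $w(e)\sum_{y\in Y} w(y)\le\Cr(G)$. The delicate point is making sure the rerouting preserves the crossings with $e'$ rather than destroying them. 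To handle this, I would route the copies of $y$ first, and only then route the copies of $e$ along $e'$, which now crosses every rerouted copy of $y$.

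For \ref{case:reduction}, one inequality is \ref{case:HtoG}, taking the minimum over good drawings of $H$. For the reverse, apply the \ref{case:same} rerouting to every edge class of $H$ in an optimal drawing $\mathcal{D}$. This yields an optimal drawing in which all copies of each edge follow a single arc. Keeping one copy per class gives a drawing $\mathcal{D}_H$ of $H$, which is good because $\mathcal{D}$ is good (optimal drawings are good \cite{schaefer}). Its weighted crossing count satisfies $\Cr_w(\mathcal{D}_H)=\Cr(G)$, so $\Cr_w(H)\le\Cr(G)$.

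I expect the main obstacle to be the rigorous handling of the rerouting in \ref{case:same}. One must check that the new drawing is still a valid drawing, with arcs avoiding vertices, and that the crossing count is exactly right. Once that tube-rerouting lemma is established carefully, the remaining items are short counting consequences.
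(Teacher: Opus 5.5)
Your proposal is correct and follows essentially the same route as the paper. You use the thin-tube construction for (i), and for (ii)--(v) you redraw parallel copies alongside a single anchor copy, which preserves optimality by (ii); the differences are cosmetic, e.g.\ in (iv) you reroute the copies of the edges of $Y$ before those of $e$ while the paper does $e$ first, and both orders work because the anchors $e'$ and $y'$ are never moved. Your ``small point'' in (ii) is settled by goodness of $\mathcal{D}$ rather than by the minimality of $e_1$: parallel copies share endvertices and so never cross, as you already use in (iii).
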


\begin{proof}
Note that $\mathcal{D}$ is a good drawing.

Proof of~\ref{case:HtoG}: Create a drawing $\mathcal{D}_{H,G}$ from $\mathcal{D}_H$ as follows: for any edge $e$ of $H$ draw the copies of $e$ in $G$ close enough to
the image of $e$ in $\mathcal{D}_H$ so that  copies of two edges that are uncrossed in $\mathcal{D}_H$ do not cross in the drawing created.

Proof of~\ref{case:same}: Assume to the contrary that $e_1,e_2$ are parallel copies of the edge $e$ of $H$ that do not cross the same number of edges in $\mathcal{D}$, without loss of generality $e_1$ has fewer crossings. Redraw $e_2$ close to $e_1$ so $e_2$ crosses the exact same edges as $e_1$. The new drawing of $G$ has fewer crossings that $\mathcal{D}$, a contradiction,

Proof of~\ref{case:nolargecross} follows from~\ref{case:same}.

Proof of~\ref{case:psum}: Let $y_1,y_2,\ldots,y_k$ be an enumeration of the edges of $Y$, and let $y'_i$ be a copy of $y_i$ such that $e'$ and $y'_i$ crosses in $\mathcal{D}$. 
Create the sequence of drawings $\mathcal{D}^0,\mathcal{D}^1,\ldots\mathcal{D}^k$ satisfying $\Cr(\mathcal{D}^i)=\Cr(G)$ as follows:

For $\mathcal{D}^0$ redraw every parallel copy of $e$ different from $e'$ close to $e'$ so they cross the same edges as $e'$. By~\ref{case:same} $\Cr(\mathcal{D}^0)=\Cr(\mathcal{D})=\Cr(G)$.

If for some $i\in[k]$ we have $\mathcal{D}^{i-1}$, create $\mathcal{D}^i$ by redrawing every copy of $y_i$ except $y'_i$ close to $y'_i$. By~\ref{case:same} $\Cr(\mathcal{D}^{i})=\Cr(\mathcal{D}^{i-1})=\Cr(G)$.

In $\mathcal{D}^k$ every copy of $e$ crosses every copy of every edge in $Y$, consequently $w(e)\sum_{y\in Y} w(y)\le\Cr(\mathcal{D}^k)=\Cr(G)$.

To show~\ref{case:reduction}, we apply the same procedure in~\ref{case:psum}:
Let $h_1, h_2,\ldots,h_m$ be an enumeration of the edges of $H$, and $h'_1,\ldots,h'_k$ be edges of $G$ such that $h'_i$ is a fixed copy of $h_i$.
Define the sequence of drawings
$\mathcal{D}^0=\mathcal{D},\mathcal{D}^1,\mathcal{D}^2,\ldots,\mathcal{D}^m$ such that for all $i\in[m]$, $\mathcal{D}^i$ is obtained from $\mathcal{D}^{i-1}$ by redrawing the edges parallel to $h'_i$. As before $\Cr(\mathcal{D}^i)=\Cr(G)$. Let $\mathcal{D}_H$ be the subdrawing of $H$ generated by the edges $h'_1,\ldots,h'_m$ in $\mathcal{D}^m$.
Then $\Cr_w(H)\le\Cr_W(\mathcal{D}_H)=\Cr(\mathcal{D}^m)=\Cr(G)$. Using~\ref{case:HtoG}, the statement follows.
\end{proof}

We detour a bit by defining when two drawings of a graph  are equivalent (see standard graph theory books, e.g.,~\cite{diestel}).  Using $\mathcal{D}(G)$
for the set of points
that are in the drawing, we can define a face of the drawing as a maximal connected component of $\mathbb{R}^2-\mathcal{D}(G)$ (note that this does not assume that the drawing is planar).

Let $S^2$ be the two-dimensional sphere, with north pole $(0,0,1)$, and $\pi:S^2-\{(0,0,1)\}\rightarrow \mathbb{R}^2$ is the stereographic projection of the sphere to the plane.
Let $G$ be a graph and $\mathcal{D}_1,\mathcal{D}_2$ be two drawings of $G$. Drawings $\mathcal{D}_1,\mathcal{D}_2$ are called \emph{equivalent} if there is a homeomorphism
$\phi: S^2\rightarrow S^2$ such that $\pi\circ\phi\circ\pi^{-1}$ maps $\mathcal{D}_1$ to $\mathcal{D}_2$. Note that the use of $\pi, \pi^{-1}$
 allows us to switch the outer face of the drawing without changing any salient features of the drawing.

Recall that for $k\in\mathbb{N}$, a \emph{simple} graph 
is $k$-connected if it has more than $k$ vertices and the removal of fewer than $k$ vertices does not disconnect the graph.
Whitney~\cite{whitney} showed that any two crossing-free drawings of a $3$-connected \emph{simple} planar graph  are equivalent.
The Jordan-Sch\"onflies theorem \cite{mohar2001graphs} states that   if $\gamma, \delta$ are both closed Jordan curves
    in $\mathbb{R}^2$, and $f$ is a homeomorphism
    between them, then $f$ may be extended to a homeomorphism of the entire plane to itself. 
    The analogous statement with $\mathbb{S}^2$ in the place of $\mathbb{R}^2$ also holds. These imply that any two crossing-free drawings of a subdivision of
    $3$-connected simple planar graph  are equivalent.
    
Claim~\ref{cl:easy} allows us to reduce our questions about optimal drawing of a graph into questions about drawings of the underlying simple graph. 
So, we will first consider drawings of the simple graph $G_0$.

\begin{claim}\label{cl:g0} Assume that $\mathcal{D}$ is a drawing of $G_0$ such that the induced subdrawing of $G_0-f$ is planar, and the set of edges that $f$
crosses in $\mathcal{D}$ is a subset of  $\{e,g,h\}$. Then $f$ crosses each of $e$,$g$ and $h$ in $\mathcal{D}$.
\end{claim}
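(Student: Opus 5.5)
The plan is to use the rigidity of the planar part $G_0-f$ and follow $f$ through its faces. Reading the labels off Figure~\ref{fig:siran}, $e=st$, $f=v_1v_3$, $g=v_4t$ and $h=sv_2$; the first step is to fix these identifications.

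\textbf{Step 1: the embedding of $G_0-f$ is forced.} In $G_0-f$ the vertices $v_1$ and $v_3$ have degree $2$, so $G_0-f$ is a subdivision of a graph $K$ on $\{u,s,t,v_2,v_4\}$. In $K$, $v_1$ becomes an edge $us$ and $v_3$ becomes an edge $ut$, so $K$ has edges
$us,ut,st,uv_4,sv_4,tv_4,uv_2,sv_2,tv_2$.
This is $K_5$ minus the edge $v_2v_4$, which is a $3$-connected simple planar graph. By Whitney's theorem together with the Jordan--Sch\"onflies argument recalled above, every crossing-free drawing of $G_0-f$ is equivalent to the standard one. In that drawing the triangle $ust$ separates $v_4$ from $v_2$, and the six faces are the triangles
$A=v_4us$, $B=v_2us$, $C=v_4tu$, $D=v_2tu$, $E=v_4st$, $F=v_2st$,
with $v_1$ subdividing $us$ and $v_3$ subdividing $ut$.

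\textbf{Step 2: record which faces each allowed crossing connects.} The arc of $f$ leaves $v_1$ into a face incident with $v_1$, which is $A$ or $B$. It ends in a face incident with $v_3$, which is $C$ or $D$. Between consecutive crossings it stays inside one face. Each crossing moves it across an edge into the adjacent face. By hypothesis only $e$, $g$ and $h$ may be crossed. Using the face list:
\begin{itemize}
\item $e=st$ separates $E$ from $F$;
\item $g=v_4t$ separates $E$ from $C$;
\item $h=sv_2$ separates $B$ from $F$.
\end{itemize}
So the auxiliary graph on faces, with an edge for each allowed crossing, is the path $B-F-E-C$, with $A$ and $D$ isolated.

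\textbf{Step 3: conclude.} The arc of $f$ induces a walk in this auxiliary graph from $\{A,B\}$ to $\{C,D\}$. Since $A$ and $D$ are isolated, the walk must start at $B$ and end at $C$. In the path $B-F-E-C$, any such walk uses all three edges. Hence $f$ crosses $h$, $e$ and $g$, each at least once, which proves Claim~\ref{cl:g0}.

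The main obstacle I expect is Step 1. One must justify carefully that the combinatorial face structure is determined, and that ``the arc of $f$ lies in a face between crossings and changes face exactly when it crosses an edge'' is legitimate. The latter needs the drawing to be good, or at least that crossings are transversal. If goodness is not assumed, I would first note that tangencies can be removed or ignored without affecting the conclusion. The face computations themselves are routine.
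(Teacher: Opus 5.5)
Your proof is correct and follows the paper's approach: both use Whitney's theorem to fix the embedding of $G_0-f$ (a subdivided $K_5$ minus an edge), then trace $f$ from $v_1$ to $v_3$ through the regions it can reach while crossing only $e,g,h$. The paper uses the three regions cut out by the closed walk $v_3tv_2uv_1sv_4uv_3$ and states the last step only briefly, whereas your walk in the face path $B-F-E-C$ (with $A$ and $D$ isolated) makes that final step explicit.
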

\begin{proof}
Recall the drawing $\mathcal{D}_0$ of $G_0$ (Figure~\ref{fig:help}), and set $H_0=G_0-f$.
$H_0$ is a subdivision of a $K_5$ minus an edge, and $K_5$ minus an edge is a planar $3$-connected simple graph.
Therefore the subdrawings
$\mathcal{D}_0\vert_{H_0},\mathcal{D}\vert_{H_0}$ of $H_0$ induced by the drawings $\mathcal{D}_0,\mathcal{D}$ are equivalent. 
The edges $E(G_0)-\{e,f,g,h\}$ form the closed walk $v_3tv_2uv_1sv_4uv_3$.
The subdrawings of this closed walk in $\mathcal{D}$ (and $\mathcal{D}_0$) divide the plane into the same
$3$ regions, one of which contains the edges
$e,g,h$ (Figure~\ref{fig:help}, left picture).  As $G$ is non-planar, $f$ must cross some edges 
in $\mathcal{D}$ (and $\mathcal{D}_0$).
Since $f$ can only cross the edges of $e,g,h$, edge $f$ must also lie in the same region as  $e,g,h$. 
As the two endvertices of $f$ lie on the $u$-$t$ and $u$-$s$ edges, $f$ crosses all of $e,g,h$.
\end{proof}

\begin{figure}[http]
\center
\begin{tikzpicture}[scale=1]
\node[draw,circle, minimum size = 13pt] (u) at (0,0) {$u$};
\node[draw,circle, minimum size = 13pt] (v1) at  (4,0) {$v_1$};
\node[draw,circle, minimum size = 13pt] (v3) at (.28,2) {$v_3$};
\node[draw,circle, minimum size = 13pt] (v2) at (2.3,.275) {$v_2$};
\node[draw,circle, minimum size = 13pt] (t) at (1.6,.7) {$t$};
\node[draw,circle, minimum size = 13pt] (v4) at (0,3) {$v_4$};
\node[draw,circle, minimum size = 13pt] (s) at (4,3) {$s$}; 
\draw (u) --(v1);
\draw (s) -- (v1);
\draw (v4) -- (s);
\draw (u) -- (v4);
\draw (v4) -- (t);
\draw (u) -- (v3);
\draw (u) -- (v2);
\draw (t) -- (v2);
\draw (t) -- (v3);
\draw (s) -- (v2);
\draw (v1) -- (v3);
\draw (s)--(t);
\node at (3.4,.5) {$f$};
\node at (.8,2.3) {$g$};
\node at (2.6,1.8) {$e$};
\node at (3.4,1.8) {$h$};
\node at (2,-.5) {$\mathcal{D}_0$};
\end{tikzpicture}
\qquad
\begin{tikzpicture}[scale=1]
\node[draw,circle, minimum size = 13pt] (u) at (0,0) {$u$};
\node[draw,circle, minimum size = 13pt] (v1) at  (2.2,0) {$v_1$};
\node[draw,circle, minimum size = 13pt] (v3) at (1,.55) {$v_3$};
\node[draw,circle, minimum size = 13pt] (v2) at (.5,1.3) {$v_2$};
\node[draw,circle, minimum size = 13pt] (t) at (1.6,1.3) {$t$};
\node[draw,circle, minimum size = 13pt] (v4) at (0,3) {$v_4$};
\node[draw,circle, minimum size = 13pt] (s) at (2.2,3) {$s$}; 
\draw (u) --(v1);
\draw (s) -- (v1);
\draw (v4) -- (s);
\draw (u) -- (v4);
\draw (v4) -- (t);
\draw (u) -- (v3);
\draw (u) -- (v2);
\draw (t) -- (v2);
\draw (t) -- (v3);
\draw (s) -- (v2);
\draw (v1) -- (v3);
\draw (s)--(t);
\node at (1.6,.4) {$f$};
\node at (.75,2.4) {$g$};
\node at (1.45,2.4) {$h$};
\node at (1.65,1.8) {$e$};
\node at (1.1,-.5) {$\mathcal{D}_1$};
\end{tikzpicture}
\caption{The (rectilinear) drawings $\mathcal{D}_0$ and $\mathcal{D}_1$ of $G_0$. 
In $\mathcal{D}_0$, the edge $f$ crosses the edges $e,g,h$. In $\mathcal{D}_1$, the edges $g$ and $h$ cross.
} 
\label{fig:help}
\end{figure}

\begin{claim}\label{cl:final}
Let $G$ be a graph with representation $(G_0,w)$ and set $$\kappa=\min\left(w(g)w(h),\,w(f)(w(e)+w(g)+w(h))\right).$$
If for all $z\in E(G_0)\setminus\{e,f,g,h\}$ we have $w(z)>\kappa$, then $\Cr(G)=\kappa$. Moreover
\begin{enumerate}[label={\upshape (\roman*)}]
\item\label{case:ghcross} If $w(g)w(h)<w(f)(w(e)+w(g)+w(h))$, then in any optimal drawing of $G$ only copies of $g$ and $h$ cross. In particular, $e$ and $f$ are never crossed in an optimal drawings.
\item\label{case:gh} If $w(g)w(h)>w(f)(w(e)+w(g)+w(h))$,, then in any optimal drawings of $G$, all copies of $f$ cross all copies of $e,g$ and $h$. In particular, $e$ is crossed in any optimal drawings.
\end{enumerate}
\end{claim}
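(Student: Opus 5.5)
The plan is to prove the upper bound with the two explicit drawings of Figure~\ref{fig:help}, and the lower bound by analysing an optimal drawing of $G$ reduced to a drawing of $G_0$.

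\textbf{Upper bound.} By Claim~\ref{cl:easy}\ref{case:HtoG}, $\Cr(G)\le\Cr_w(\mathcal{D}_0)$ and $\Cr(G)\le\Cr_w(\mathcal{D}_1)$.
\begin{itemize}
\item In $\mathcal{D}_0$ the only crossings are $f$ with each of $e,g,h$, so $\Cr_w(\mathcal{D}_0)=w(f)(w(e)+w(g)+w(h))$.
\item In $\mathcal{D}_1$ the only crossing is between $g$ and $h$, so $\Cr_w(\mathcal{D}_1)=w(g)w(h)$.
\end{itemize}
Hence $\Cr(G)\le\kappa$.

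\textbf{Lower bound.} Take an optimal drawing $\mathcal{D}$ of $G$; it is good.
\begin{enumerate}
\item Since $w(z)>\kappa\ge\Cr(G)$ for $z\notin\{e,f,g,h\}$, Claim~\ref{cl:easy}\ref{case:nolargecross} shows that no copy of such a $z$ is crossed.
\item Apply the redrawing procedure from the proof of Claim~\ref{cl:easy}\ref{case:reduction}: parallel copies are moved next to a chosen copy, keeping an optimal drawing in which all copies of an edge cross the same edges. This yields a drawing $\mathcal{D}_H$ of $G_0$ with $\Cr_w(\mathcal{D}_H)=\Cr(G)$, in which only edges of $\{e,f,g,h\}$ are crossed, and only by one another.
\item Examine which pairs among $e,f,g,h$ can cross in a good drawing. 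Goodness forbids adjacent pairs from crossing. Both $e$ and $h$ are incident with $s$ (and $e,g$ share $t$), so among pairs involving $e,g,h$ only $g$--$h$ can cross. The edge $f$ (from $v_1$ to $v_3$) shares no endpoint with $e$, $g$ or $h$, so it may cross any of them.
\end{enumerate}

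\textbf{Case analysis.}
\begin{itemize}
\item If $g$ and $h$ cross in $\mathcal{D}_H$, then already $\Cr(G)\ge w(g)w(h)\ge\kappa$.
\item Otherwise $G_0-f$ is drawn planarly, because every crossing involves $f$. By Claim~\ref{cl:g0}, $f$ then crosses all of $e,g,h$, so $\Cr(G)\ge w(f)(w(e)+w(g)+w(h))\ge\kappa$.
\end{itemize}
Thus $\Cr(G)=\kappa$.

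The same dichotomy gives (i) and (ii).
\begin{itemize}
\item Under the hypothesis of (i), an optimal drawing cannot fall in the second case. So $g$ and $h$ cross, and this crossing alone already costs $w(g)w(h)=\kappa$. Every other crossing has positive weight, so none exist: only copies of $g$ and $h$ cross.
\item Under the hypothesis of (ii), the first case is impossible. So we are in the second case, and $f$ crosses $e,g,h$ in $\mathcal{D}_H$.
\end{itemize}

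\textbf{Main obstacle: transferring from $\mathcal{D}_H$ back to the original $\mathcal{D}$.} The redrawing changes copies, so the conclusions must be shown to hold for the original optimal drawing $\mathcal{D}$. I would argue directly in $\mathcal{D}$ instead.
\begin{itemize}
\item For (i): any crossing in $\mathcal{D}$ involving a copy of $e$ or $f$ would survive in the reduced drawing $\mathcal{D}_H$ if that copy is the one chosen as the representative. Since the representative can be chosen arbitrarily, this contradicts the weight count above.
\item For (ii): fix a copy $f'$ of $f$ and copies $e',g',h'$, and consider the subdrawing of $\mathcal{D}$ on the corresponding copy of $G_0$. By step 1 only $e,f,g,h$ copies are crossed, and $g'$, $h'$ do not cross: if they did, Claim~\ref{cl:easy}\ref{case:psum} would give $\Cr(G)\ge w(g)w(h)>\kappa$. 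So $G_0-f$ is drawn planarly, and Claim~\ref{cl:g0} forces $f'$ to cross $e',g',h'$. Since the copies were arbitrary, every copy of $f$ crosses every copy of $e,g,h$.
\end{itemize}
I would check carefully that picking one copy of each edge of $G_0$ from $\mathcal{D}$ gives a legitimate drawing of $G_0$ to which Claim~\ref{cl:g0} applies.
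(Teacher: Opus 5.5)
Your proposal is correct and is essentially the paper's proof. You use the same two drawings $\mathcal{D}_0,\mathcal{D}_1$ for the upper bound and the same dichotomy for the lower bound: either copies of $g$ and $h$ cross, or Claim~\ref{cl:g0} forces $f$ to cross all of $e,g,h$. Your argument for (ii), fixing arbitrary copies and applying Claim~\ref{cl:easy}\ref{case:psum}, is exactly the paper's case (b). The detour through the reduced drawing of Claim~\ref{cl:easy}\ref{case:reduction} is harmless, since the chosen representatives are never redrawn, so $\mathcal{D}_H$ is just the subdrawing of $\mathcal{D}$ on them. The paper avoids it by applying \ref{case:psum} directly to copies $g',h'$ in $\mathcal{D}$; for (i), every pair $g',h'$ must then cross, which already accounts for all $w(g)w(h)=\Cr(G)$ crossings.
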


\begin{proof}
Claim~\ref{cl:easy}~\ref{case:HtoG} and the drawings $\mathcal{D}_0,\mathcal{D}_1$ of $G_0$ on Figure~\ref{fig:help} imply that
$\Cr(G)\le\kappa$. Since for all $z\in E(G_0)\setminus\{e,f,g,h\}$ we have $w(z)>\kappa$, Claim~\ref{cl:easy}~\ref{case:nolargecross} gives that
in any optimal drawing of $G$, only copies of $e,f,g,h$ may cross. Since in optimal drawings incident edges do not cross, if a crossing does not involve a copy of $f$,
it must happen between copies of $g$ and $h$.

Let $\mathcal{D}$ be an optimal drawing of $G$, and let $g',h'$ be a copy of $g$ and $h$ respectively. We have one of following:
\begin{enumerate}[label={\upshape (\alph*)}]
\item\label{case:1} If $g'$ and $h'$ cross in $\mathcal{D}$, then by Claim~\ref{cl:easy}~\ref{case:psum}, $\kappa\le w(g)w(h)\le\Cr(G)\le\kappa$.
\item\label{case:2} If $g'$ and $h'$ do not cross in $\mathcal{D}$, then select a subdrawing of $G_0$ by selecting one copy $z'$ of each edge $z\in E(G_0)$ such that $g',h'$ are selected.
As in this drawing $g',h'$ do not cross, the only crossings involve $f'$ with $e',g',h'$. By Claim~\ref{cl:g0} $f'$ crosses all of $e',g',h'$ and Lemma~\ref{cl:easy}~\ref{case:psum}
gives that $\kappa\le w(f)(w(e)+w(g)+w(h))\le\Cr(G)\le\kappa$.
\end{enumerate}
This gives $\Cr(G)=\kappa$.

If $w(g)w(h)<w(f)(w(e)+w(g)+w(h))$, then by~\ref{case:2} no copy of $f$ may cross any copy of $g,h,e$ in $\mathcal{D}$. Part ~\ref{case:ghcross} follows.

If $w(g)w(h)>w(f)(w(e)+w(g)+w(h))$, then by~\ref{case:1} any copies of $g$ and $h$ do not cross in $\mathcal{D}$. Part ~\ref{case:gh} follows.
\end{proof}

Now we are ready to prove our final claim:

\begin{claim}\label{cl:drawing} The following statements are true:
\begin{enumerate}[label={\upshape (\roman*)}]
\item\label{case:g1} $\Cr(G_1)=7$, and $e$ is crossed in every optimal drawing of $G_1$.
\item\label{case:g2} $\Cr(G_2)=1$, and $f$ is not crossed in any optimal drawing of $G_2$.
\item\label{case:g3} $\Cr(G_3)=21$, and $e$ is not crossed in any optimal drawing of $G_3$, but $\Cr(G-e)<\Cr(G)$.
\end{enumerate}
\end{claim}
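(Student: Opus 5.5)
The plan is to read off all three parts from Claim~\ref{cl:final} by computing $\kappa$ and checking the hypothesis $w(z)>\kappa$ for $z\notin\{e,f,g,h\}$. Then I would decide which of the alternatives \ref{case:ghcross} or \ref{case:gh} of Claim~\ref{cl:final} applies. Only the crossing-critical part of \ref{case:g3} needs an extra construction.

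For \ref{case:g1}, in $G_1$ we have $w_1(g)w_1(h)=9$ and $w_1(f)(w_1(e)+w_1(g)+w_1(h))=1\cdot 7=7$. Hence $\kappa=7$, and every other edge has weight $8>7$. Claim~\ref{cl:final} gives $\Cr(G_1)=7$. Since $9>7$, case~\ref{case:gh} says every optimal drawing has $f$ crossing $e,g,h$, so the unique copy of $e$ is crossed. Together with Claim~\ref{cl:kurat}, which says $e$ is non-Kuratowski, this gives the example for (1) of Claim~\ref{cl:main}.

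For \ref{case:g3}, in $G_3$ we have $w_3(g)w_3(h)=21$ and $w_3(f)(w_3(e)+w_3(g)+w_3(h))=2\cdot 11=22$. Hence $\kappa=21$, and every other edge has weight $22>21$. So $\Cr(G_3)=21$, and since $21<22$, case~\ref{case:ghcross} shows that only copies of $g,h$ cross; in particular $e$ is uncrossed. For criticality, I would take the rectilinear drawing $\mathcal{D}_0$ of Figure~\ref{fig:help} and delete $e$. In the resulting drawing of $G_0-e$, $f$ crosses only $g$ and $h$. Claim~\ref{cl:easy}\ref{case:HtoG}, applied to $G_3-e$ with representation $(G_0-e,w_3)$, then gives $\Cr(G_3-e)\le 2(3+7)=20<21$. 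Claim~\ref{cl:kurat} confirms that $e$ is not Kuratowski.

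For \ref{case:g2}, in $G_2$ we have $w_2(g)w_2(h)=4$ and $w_2(f)(w_2(e)+w_2(g)+w_2(h))=5$. All other weights are $5>\kappa$, and since $4<5$, case~\ref{case:ghcross} applies: in every optimal drawing only copies of $g$ and $h$ cross, so $f$ is never crossed. By Claim~\ref{cl:kurat}, the strong Kuratowski edges are the copies of weight-one edges outside $\{e,q,r\}$, and $f$ is the unique such edge.

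The main obstacle is the numerical value in \ref{case:g2}. Claim~\ref{cl:final} yields $\Cr(G_2)=\kappa=\min(4,5)=4$, not $1$. This is confirmed by drawing $\mathcal{D}_1$, which has cost $4$, together with the lower bound in Claim~\ref{cl:final}. So the stated value ``$\Cr(G_2)=1$'' appears to be a misprint for $4$ (the only weight-one data are $w_2(e)=w_2(f)=1$). I would prove the statement with $\Cr(G_2)=4$. The qualitative conclusion that $f$ is never crossed in an optimal drawing, which is what (2) of Claim~\ref{cl:main} needs, is unaffected.
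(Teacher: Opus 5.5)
Your proposal is correct and is essentially the paper's proof: compute $w(g)w(h)$ and $w(f)(w(e)+w(g)+w(h))$ for each $G_i$, check that the remaining weights exceed $\kappa$, and apply the appropriate case of Claim~\ref{cl:final}; for $G_3$, bound $\Cr(G_3-e)\le 20$ using the subdrawing of $\mathcal{D}_0$ without $e$. You are also right about part (ii): the paper's own computation ($4$ versus $5$) gives $\Cr(G_2)=4$, so the stated value $1$ is a misprint, and the conclusion that $f$ is never crossed in an optimal drawing is unaffected.
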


\begin{proof}
Recall that for $i\in\{1,2,3\}$, the representation of $G_i$ is $(G_0,w_i)$.
 
 $w_1(h)w_1(g)=9$, $w_1(f)(w_1(e)+w_1(h)+w_1(h))=7$ and $w_1(z)=8$ for $z\notin\{e,f,g,h\}$. Part~\ref{case:g1} follows from Claim~\ref{cl:final}.
 
  $w_2(h)w_2(g)=4$, $w_2(f)(w_2(e)+w_2(h)+w_2(h))=5$ and $w_2(z)=5$ for $z\notin\{e,f,g,h\}$. Part~\ref{case:g2} follows from Claim~\ref{cl:final}.
  
   $w_3(h)w_3(g)=21$, $w_3(f)(w_3(e)+w_3(h)+w_3(h))=22$ and $w_3(z)=22$ for $z\notin\{e,f,g,h\}$.  Claim~\ref{cl:final} implies that
   $\Cr(G_3)=21$, and $e$ is not crossed in any optimal drawing of $G_3$. Using the subdrawing $\mathcal{D}'$ of $G_0-e$ induced by the drawing $\mathcal{D}_0$ (see Figure~\ref{fig:help}), Claim~\ref{cl:easy}~\ref{case:HtoG} gives that $\Cr(G-e)\le\Cr_{w_3}(\mathcal{D}')=20<\Cr(G)$  
\end{proof}

It is clear that Claim~\ref{cl:kurat} and Claim~\ref{cl:drawing} implies Claim~\ref{cl:main}.


\begin{thebibliography}{10}
\bibitem{diestel} R. Diestel,
  \newblock Graph Theory, Sixth edition 2025,
  \newblock Springer-Verlag, Heidelberg
Graduate Texts in Mathematics, Volume 173.

\bibitem{fary} I. F\'ary,
\newblock On straight-line representation of planar graphs,
\newblock  Acta Sci. Math. (Szeged, 1948), 11: 229--233

  \bibitem{kuratowski} K. Kuratowski, 
  \newblock Sur le probl\`eme des courbes gauches en topologie,
  \newblock Fund. Math. (in French) 15 (1930), 271--283. 

\bibitem{mohar2001graphs}
{B. Mohar, C. Thomassen},
 \newblock{Graphs on Surfaces},
  {2001},
  \newblock{Johns Hopkins University Press}

  \bibitem{schaefer} M. Schaefer, 
  \newblock Crossing numbers of graphs,
  \newblock CRC Press, Taylor and Francis group, Discrete Mathematics and its Applications, 2018.

  
  \bibitem{siran} J. \u{S}ir\'a\u{n}, 
  \newblock Crossing-critical edges and Kuratowski subgraphs of a graph,
  \newblock J. Comb. Theory B 35(2) (1983), 83--92. 
\newblock \href{https://doi.org/10.1016/0095-8956(83)90064-3}{https://doi.org/10.1016/0095-8956(83)90064-3}

\bibitem{wagfary}
\newblock K. Wagner
\newblock Bemerkungen zum Vierfarbenproblem
\newblock Jahresbericht der Deutschen Mathematiker-Vereinigung (in German, 1936)  46: 26--32.


  \bibitem{whitney} H. Whitney, 
  \newblock Congruent graphs and the connectivity of graphs,
  \newblock Am. J. Math. 54 (1932) 150--168.
 \end{thebibliography}
\end{document}